\documentclass[10pt, reqno]{amsart}

\usepackage{amsfonts}
\usepackage{amsthm}
\usepackage{mathrsfs}
\usepackage{amsmath}
\usepackage{hyperref}

\theoremstyle{theorem}
\newtheorem{theorem}{Theorem}
\newtheorem{corollary}[theorem]{Corollary}
\newtheorem{lemma}[theorem]{Lemma}
 
\theoremstyle{definition}

\newtheorem*{remark}{Remark}

\newcommand{\M}{\mathbb{M}}%matrices
\newcommand{\C}{\mathbb{C}}%complex plane
\newcommand{\R}{\mathbb{R}}%real numbers

\DeclareMathOperator{\re}{Re}
\DeclareMathOperator{\tr}{tr}
\DeclareMathOperator{\dist}{dist}

\DeclareMathOperator{\ran}{ran}

\begin{document}
\title{Polynomially Isometric Matrices in Low Dimensions}
\author{Cara D. Brooks, Alberto A. Condori, and Nicholas Seguin}
\maketitle

\begin{abstract}
			Given two $d\times d$ matrices, say $A$ and $B$, when do $p(A)$ and $p(B)$ have the same 
			``size'' for every polynomial $p$?  In this article, we provide definitive results in the
			cases $d=2$ and $d=3$ when the notion of size used is the spectral norm.
\end{abstract}

\section{Introduction.}\label{introSection} 

Given a square matrix $A$, there is no ambiguity in what ``squaring a matrix'' 
should mean; $A^2$ is the product of $A$ with itself.  This simple notion can 
be extended in a natural way to nonnegative integer powers of matrices: $A^n$ 
is the product of $n$ copies of $A$ when $n>0$, while $A^0$ is the identity 
matrix $I$. Likewise, one may construct other matrices associated with $A$, 
namely, polynomial functions $p(A)$ of $A$.  That is, given a polynomial 
with complex coefficients $p(z)=c_0+c_1 z+\cdots+c_m z^m$, $p(A)$ denotes the 
square matrix $c_0 I+c_1 A+\cdots+c_m A^m$.  This definition also works just 
as well if $A$ is an operator (i.e., a linear transformation) on a complex 
vector space; the only difference is that the operation of composition is 
used instead of matrix product.  The assignment $p\mapsto p(A)$ induced by 
$A$ is often referred to as its polynomial functional calculus.

Where do polynomial functions of a matrix or operator come about, and why 
are they important?  Although answers abound, let us mention only a couple 
of places where they are encountered, perhaps in disguise, in the undergraduate 
curriculum.  For other types of functional calculus and their applications, we 
refer the interested reader to \cite[Chapter 10]{Ru} for the Riesz--Dunford 
holomorphic functional calculus (for the graduate student) and to \cite{N} 
for a discussion of the Dyn'kin nonholomorphic functional calculus (for the 
advanced scholar).

First, a fundamental result in linear algebra states that every operator $A$ 
on a finite-dimensional complex vector space (e.g., a square matrix) has an 
eigenvalue.  (As usual, $\lambda$ is an eigenvalue of $A$ if there is a 
nonzero vector $v$ so that $Av=\lambda v$.)  A quick proof of that result 
amounts to observing that given a nonzero vector $w$, there is a polynomial 
$p$ so that $p(A)w=0$ (see \cite[p. 145]{Ax} for details).  Second, a linear 
inhomogeneous differential equation (DE) with constant coefficients can 
be seen as an operator equation of the differentiation operator $D$.  For 
instance, the DE 
\begin{equation}\label{DEex}
			y^{\prime\prime}(t)+c_1 y^\prime(t)+c_0y(t)=f(t)
\end{equation}
can be written as $p(D)y=f$, where $p(z)=z^2+c_1z+c_0$ is a quadratic polynomial 
and $y$ is the unknown function.  In this case, if $p_j(z)=z-\lambda_j$ for $j=1,2$ 
are such that $p_1(z)p_2(z)=p(z)$, then the functional calculus for $D$ gives a way 
to solve \eqref{DEex}: solve consecutively the first order DEs 
$y_1^\prime-\lambda_1 y_1=p_1(D)y_1=f$ for $y_1$ and 
$y^\prime-\lambda_2 y=p_2(D)y=y_1$ for $y$.

In applications, not only is a function of a matrix important, so is its “size.”
For instance, when $A$ is a square matrix, $\|A^k\|$ and $\|(zI-A)^{-1}\|$ arise 
naturally in the models of discrete-time evolution processes and responses 
of forced systems, respectively \cite[Chapter 47]{TE}. Such quantities are used to 
describe behavior, and so it is natural to question what conditions a matrix $B$ 
might satisfy to ensure that its behavior is the same as that of $A$.   

More precisely and following \cite{MZ}, we say that $A$ and $B$ are 
\textbf{polynomially isometric}\footnote{The terminology used in this 
article was introduced in \cite{MZ}.  However, the same notion has 
appeared previously as ``$A$ and $B$ have the same norm behavior,'' 
e.g., see \cite{GT} and \cite[Chapter 47]{TE}.} (under the spectral 
norm) if
\begin{equation}\label{condP}
			\|p(A)\|=\|p(B)\|\quad\text{ for \emph{all} polynomials } p. 
\end{equation}

Thus, in this article, we consider the following question:
\begin{quote}
			{\normalsize Given a pair of square matrices $A$ and $B$, what 
			set of invariants (e.g., spectra, Frobenius norms, etc.) are 
			necessary and sufficient to ensure that $A$ and $B$ are 
			polynomially isometric?}
\end{quote}

One might think that unitary similarity characterizes \eqref{condP}, 
but the condition turns out to be too strong. Unitary similarity is 
certainly \emph{sufficient} to ensure that two matrices are polynomially 
isometric; after all, if there is a unitary matrix $U$ (i.e., $U^{*}U=UU^{*}=I$) 
so that $B=UAU^{*}$, then $\|p(B)\|=\|p(A)\|$ holds for every polynomial $p$ 
since  $p(B)=Up(A)U^{*}$.  Furthermore, unitary similarity is also necessary 
for \eqref{condP} to hold if the matrices $A$ and $B$ are $2\times 2$ (see 
Theorem \ref{equivalenceM2} below).  However, if 
\begin{equation}\label{equalSpectraWithoutMultEx}
			A=\left[
			\begin{array}{ccc}
						1	&	0	& 0\\
						0	&	0	& 0\\
						0	&	0	&	0
			\end{array}\right]\;\text{ and }\;
			B=\left[
			\begin{array}{ccc}
						1	&	0	& 0\\
						0	&	1	& 0\\
						0	&	0	&	0
			\end{array}\right]
\end{equation}
then $\|p(A)\|=\max\{|p(1)|,|p(0)|\}=\|p(B)\|$ holds for all polynomials $p$, i.e., 
$A$ and $B$ are polynomially isometric, but $A$ and $B$ cannot be unitarily similar 
because they have different ranks. 
 
One might then turn to equality of spectra.  After all, equality of spectra is 
a \emph{necessary} condition for \eqref{condP} and furthermore, by the spectral 
theorem,  $\|p(N)\|=\max\{|p(\lambda)|:\lambda \in \sigma(N)\}$ holds whenever 
$N$ is a normal matrix (i.e., $N^{*}N = NN^{*}$).  However, the condition is not 
sufficient (see \eqref{sameMinPolEx} below) and this is not an isolated case; 
numerical analysts have long known that knowledge of the spectrum of a matrix 
\emph{alone} is not enough to describe the behavior of nonnormal matrices.  On 
the other hand, pseudospectral analysis has proven to be a useful tool to better 
understand the behavior of matrices that arise in scientific applications; i.e., 
matrices that are nonnormal and of large dimension.  For instance, the reader can 
find a wealth of examples in the book \cite{TE} (see also \cite{T}) that illustrate 
how pseudospectra\footnote{Roughly, a pseudospectral plot for a matrix $A$ consists 
of contour plots of the norm $\|(zI-A)^{-1}\|$ of its resolvent $(zI-A)^{-1}$.} 
may capture the ``spirit'' of a (nonnormal) matrix more effectively.  Unfortunately, 
it is observed in \cite{GT} that for \eqref{condP} to hold, it is necessary, but not 
sufficient, that $A$ and $B$ have identical pseudospectra (see \eqref{condR} below).

But hope is not lost! The condition of identical pseudospectra does not suffice in 
general, but for square matrices of small dimension, we show below that it does the 
trick.  We address the question in the context of $2\times 2$ and $3\times 3$ matrices, 
and prove the necessity and sufficiency of identical pseudospectra for \eqref{condP} 
to hold.  Not only does this article serve as ``food for thought'' for the linear-algebra 
enthusiast, it is aimed to provide clarity for newcomers to matrix analysis concerning 
the precise connections between some related notions encountered in the field, namely, 
polynomially isometric matrices, identical pseudospectra, super-identical pseudospectra, 
and unitary similarity.  

\section{Terminology and the main result.}\label{terminologySection}

Let $\C^{d}$ denote complex Euclidean $d$-dimensional space, and 
let $\M_{d}$ be the algebra of complex $d\times d$ matrices.  For
$T\in\M_d$, $\tr T$ denotes the trace of $T$.  We denote the Frobenius 
(or Hilbert--Schmidt) norm of $T$ by $\|T\|_{F}$ and the spectral norm 
of $T$ by $\|T\|$.  That is, $\|T\|_{F}=\sqrt{\tr T^{*}T}$, where $T^{*}$ 
is the conjugate transpose of $T$, and 
$\|T\|=\sup\{\|Tv\|_{\C^{d}}: \|v\|_{\C^{d}}=1\}$ is the operator norm 
induced by the Euclidean norm on $\C^{d}$.  The minimal and characteristic 
polynomials of $T$ are denoted by $m_{T}$ and $\chi_T$, respectively.  That is,  
$m_T$ is the monic polynomial $p$ of minimal degree such that $p(T)=0$ while
\[	\chi_T(z)=\det(zI-T),	\]
where the determinant $\det(A)$ is the product of the eigenvalues of $A\in\M_d$
(taking into account multiplicities).  
As usual, the spectrum $\sigma(T)$ of $T$ is the set of eigenvalues of $T$, i.e.,
\[	\sigma(T)=\{\lambda\in\C: \lambda I-T\text{ is not invertible}\}.	\]
Finally, the singular values $s_1(T), \ldots, s_{d}(T)$ of $T$ are the 
nonnegative square roots of the eigenvalues of $T^{*}T$ listed in nonincreasing 
order.  Thus, $s_1(T)=\|T\|$, $\|T\|_{F}^2=s_{1}^{2}(T)+\cdots+s_{d}^{2}(T)\geq \|T\|^2$, 
and $s_{d}(T)=\|T^{-1}\|^{-1}$ whenever $T$ is invertible.  We refer the reader 
to \cite{Ax} and \cite{H} for further explanations and results concerning these 
concepts.

At this point, one may be wondering what can (and cannot) be expected of 
polynomially isometric matrices $A$ and $B$. Surely, they need not have 
the same characteristic polynomials.  This is demonstrated by the pair 
of matrices in \eqref{equalSpectraWithoutMultEx}. Must they have the same 
spectra? Absolutely.  In fact, \eqref{condP} implies (for matrices $A$ and 
$B$ of arbitrary size) that the minimal polynomials $m_{A}$ and $m_{B}$ must 
be equal.

On the other hand, equality of minimal polynomials is not enough to guarantee 
the converse: the matrices
\begin{equation}\label{sameMinPolEx}
			A=\left[
			\begin{array}{ccc}
						0	&	0	& 0\\
						1	&	0	& 0\\
						0	&	0	&	0
			\end{array}\right]\;\text{ and }\;
			B=\left[
			\begin{array}{ccc}
						0	&	0	& 0\\
						1	&	0	& 0\\
						2	&	0	&	0
			\end{array}\right]
\end{equation}
have minimal polynomial $m_{A}(z)=m_{B}(z)=z^2$ but
\[	\|A\|\leq\|A\|_{F}=1\;\text{ while }\;
		\|B\|\geq\|(0,1,2)\|_{\C^{3}}=\sqrt{5}.	\]
Thus, \eqref{condP} fails with $p(z)=z$.

As mentioned in Section \ref{introSection}, it is known that having identical 
pseudospectra is also a necessary condition for matrices to be polynomially 
isometric \cite{GT}.  To be precise, let us agree that two square matrices 
$A$ and $B$ (\emph{not} necessarily of the same size) have \textbf{identical 
pseudospectra} if\footnote{In view of the well-known inequality 
$\|(zI-T)^{-1}\|\geq\dist^{-1}(z,\sigma(T))$, valid for $z\notin\sigma(T)$ (a 
consequence of Gelfand's spectral radius formula), we adopt the convention that 
$\|(zI-T)^{-1}\|=\infty$ for $z\in\sigma(T)$.  Thus, \eqref{condR} implies 
$\sigma(A)=\sigma(B)$.}
\begin{equation}\label{condR}
			\|(zI-A)^{-1}\|=\|(zI-B)^{-1}\|\quad\text{ for all }z\in\C. 
\end{equation}

Now since \eqref{condR} implies $m_{A}=m_{B}$ (see Theorem \ref{ipImpliesEqualMs} 
below), one may ask whether a pair of matrices are polynomially isometric precisely 
when they have identical pseudospectra.  If at least one of $A$ or $B$ is \emph{normal}, 
an affirmative answer is known \cite{BC}.  However, an example from \cite{GT} shows 
(after padding a matrix with zeros) that there are $5\times 5$ (nonnormal) matrices 
having identical pseudospectra for which the condition in \eqref{condP} fails with 
$p(z)=z$. (This example also appears in \cite[Chapter 47]{TE}.)  Furthermore, 
\cite{BR} and \cite{RR} contain examples of $4\times 4$ matrices having identical 
pseudospectra but whose squares have distinct norms, i.e., \eqref{condP} fails with 
$p(z)=z^2$.  Nevertheless, the following holds.
\begin{theorem}\label{MainThm}
			The following statements are equivalent for $A,B\in\M_d$ when $d=2,3$.
			\begin{enumerate}
					\item $A$ and $B$ have identical pseudospectra. 
					\item $A$ and $B$ are polynomially isometric. 
			\end{enumerate}
\end{theorem}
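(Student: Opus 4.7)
My plan is to prove $(2) \Rightarrow (1)$ in arbitrary dimension and to handle $(1) \Rightarrow (2)$ separately in the two dimensions at hand. For $(2) \Rightarrow (1)$, I would start from the observation, recorded earlier in the paper, that polynomially isometric matrices share the same minimal polynomial; call this common polynomial $m$. For $z \notin \sigma(A) = \sigma(B)$, the polynomial $q_z(w) := (m(z) - m(w))/(z - w)$ satisfies $(zI - A) q_z(A) = m(z) I - m(A) = m(z) I$, so $(zI - A)^{-1} = q_z(A)/m(z)$, and therefore $\|(zI - A)^{-1}\| = \|q_z(A)\|/|m(z)| = \|q_z(B)\|/|m(z)| = \|(zI - B)^{-1}\|$; the spectral values are handled by the stated convention. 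For $(1) \Rightarrow (2)$ when $d = 2$, I would invoke Theorem \ref{equivalenceM2}, which upgrades identical pseudospectra on $\M_2$ to unitary similarity, from which polynomial isometry is immediate via $p(UAU^{*}) = U p(A) U^{*}$.

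The bulk of the work goes into $(1) \Rightarrow (2)$ when $d = 3$. My plan is a case analysis driven by the common minimal polynomial $m = m_A = m_B$, whose equality comes from Theorem \ref{ipImpliesEqualMs}. When $\deg m = 1$, both matrices equal the same scalar multiple of $I$. When $\deg m = 2$ with distinct roots $(z - \alpha)(z - \beta)$, a Schur-form calculation shows that $A$ (respectively $B$) is unitarily similar to a direct sum of an upper triangular $2 \times 2$ block with diagonal $(\alpha, \beta)$ and a $1 \times 1$ block equal to whichever eigenvalue has algebraic multiplicity two; the off-diagonal parameter of the $2 \times 2$ block is read off from the residue of $\|(zI - A)^{-1}\|$ at that eigenvalue, so $A$ and $B$ have matching $2 \times 2$ summands, and since the full norm $\|p(\cdot)\|$ coincides with the norm of $p$ on the $2 \times 2$ summand, the $d = 2$ result closes the sub-case. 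When $\deg m = 2$ with a double root, $A - \alpha I$ is nilpotent of index two and rank one, hence unitarily similar to the $3 \times 3$ matrix with single nonzero entry $s = \|A - \alpha I\|$ in position $(1, 2)$; since $s$ controls the order-$2$ pole of $\|(zI - A)^{-1}\|$ at $\alpha$, the matrices $A$ and $B$ again coincide up to unitary similarity.

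The main obstacle is the $\deg m = 3$ case, where $A$ and $B$ are non-derogatory. After translation, the diagonal of a Schur form is pinned down by the spectrum, and the off-diagonal entries $a, b, c$ of $A$ are classified, modulo the residual diagonal unitary conjugation, by $|a|, |b|, |c|$ together with the single phase $\arg(a \bar b c)$. My plan is to recover these four real invariants from the function $z \mapsto \|(zI - A)^{-1}\|$: I would expand $\|(zI - A)^{-1}\|^{2}$ about each eigenvalue and extract, from successive orders in the distance to that eigenvalue, the top-order coefficient, then lower-order combinations of the invariants, and finally a tie-breaking invariant that fixes the phase. The sub-case $m(z) = (z - \alpha)^{3}$---a single Jordan block---is the most delicate because the pole has maximal order $3$ and all four invariants contribute simultaneously; the sub-cases of three distinct eigenvalues or $(z - \alpha)^{2}(z - \beta)$ are lighter because the poles at different eigenvalues separate cleanly and reduce to the $2 \times 2$ block analyses already handled at degree two. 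Once the four invariants are shown to match, $A$ and $B$ are unitarily similar within each sub-case and therefore polynomially isometric.
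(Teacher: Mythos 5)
Your reduction to cases by degree of the common minimal polynomial is sound, and several pieces are correct: the direct argument for $(2)\Rightarrow(1)$ via $(zI-A)^{-1}=q_z(A)/m(z)$ is a nice self-contained alternative to the paper's citation of \cite{GT}; the $d=2$ case via Theorem \ref{equivalenceM2} and the two quadratic sub-cases for $d=3$ are essentially right (the distinct-root sub-case is exactly the computation the paper does with the matrices $T(\gamma,\alpha,\beta,\delta)$ in Theorem \ref{HalfMainThm}). The gap is in the $\deg m=3$ case, which is where the paper's real work lies and which your plan both leaves unexecuted and aims at a false target. You propose to recover the four Schur invariants $|a|,|b|,|c|$ and a phase from $z\mapsto\|(zI-A)^{-1}\|$ and conclude that $A$ and $B$ are \emph{unitarily similar}. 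That conclusion is unattainable: $A$ and $A^{t}$ always have identical (indeed super-identical) pseudospectra, and the paper's example following Theorem \ref{thmUnitaryEquivalence} --- the nilpotent matrix with superdiagonal $(1,2)$ versus its transpose, which has Schur data $(2,1)$ --- shows a non-derogatory pair with identical pseudospectra that is \emph{not} unitarily similar. So the resolvent norm can only determine $\{|a|,|b|\}$ as an unordered pair, and the best possible conclusion is ``$A$ is unitarily similar to $B$ or to $B^{t}$.'' That weaker conclusion does still give polynomial isometry (since $\|p(B^{t})\|=\|p(B)\|$), but your plan neither notices the obstruction nor supplies this repair. (Minor symptom of the same looseness: the diagonal-conjugation invariant is $\arg(ab\bar c)$, not $\arg(a\bar b c)$.)

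Beyond that, the extraction itself is only gestured at. The first-order expansion of $s_1(N^2+wN+w^2I)$ about a triple eigenvalue plausibly yields $|ab|$ and the phase, but separating $|a|^2+|b|^2$ requires second-order singular-value perturbation theory with a monotonicity argument that you do not carry out; and the claim that the three-distinct-eigenvalue sub-case ``reduces to the $2\times2$ block analyses'' is unjustified, since such a matrix does not split unitarily into a $2\times2\oplus1\times1$ direct sum --- that splitting is available precisely when the minimal polynomial is quadratic. The paper avoids all of this: for $\deg m=3$ one has $\chi_A=m_A=m_B=\chi_B$, so Theorem \ref{thmSIPCharacterization} (proved via Lemma \ref{resolventEigenRe} on numerical ranges, which gives $\tr A^{*}A=\tr B^{*}B$, combined with $\prod_k s_k(zI-A)=|\det(zI-A)|$ and $\sum_k s_k^2(zI-A)=\|zI-A\|_F^2$) upgrades identical pseudospectra to super-identical pseudospectra, and the classification from \cite{BR} then gives unitary similarity to $B$ or $B^{t}$, hence polynomial isometry. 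To complete your proof you would either need to import that machinery or genuinely carry out the asymptotic analysis modulo the transpose symmetry.
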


In the case $d=2$, the equivalence in Theorem \ref{MainThm} was mentioned (without 
proof) in \cite{GT} and so in Section \ref{sipSection}, we establish a slight 
improvement that either of the two statements listed in Theorem \ref{MainThm} 
is equivalent to saying that $A$ and $B$ are unitarily similar. In Section 
\ref{sipSection}, we also introduce the stronger notion of super-identical 
pseudospectra.  The main result of that section clarifies which $3\times 3$ 
matrices with identical pseudospectra also have super-identical pseudospectra.
This, coupled with the fact that matrices with identical pseudospectra have 
the same minimal polynomials, leads to the reduction of the proof of Theorem 
\ref{MainThm} to the case of matrices with quadratic minimal polynomials in 
Section \ref{quadMinPolySection}. Furthermore, in that context, we establish 
an easy-to-check necessary and sufficient condition in terms of the Frobenius 
norm that determines when a pair of matrices have identical pseudospectra.  
Finally, in Appendix \ref{appendix}, we include proofs of two technical results 
concerning $d\times d$ matrices having identical pseudospectra that are used in 
Sections \ref{sipSection} and \ref{quadMinPolySection}.

\section{Super-identical pseudospectra.}\label{sipSection}

Recall that polynomially isometric matrices must have identical pseudospectra 
(regardless of their size), but the converse need not hold.  Even more surprisingly, 
it is known that there are pairs of matrices $A$ and $B$ with identical pseudospectra 
for which the corresponding norms $\|A^{k}\|$ and $\|B^{k}\|$ for $k\geq 2$ are 
completely unrelated (see \cite[Theorem 2.3]{R} for details).  This can be attributed 
to the fact that, roughly speaking, parts of a matrix may not actively play a role when 
computing its spectral norm, e.g., see the proof of Theorem \ref{quadMinPolyThm} below.
So, in an attempt to prevent such parts from being ``hidden'' and drawing inspiration 
from the way that pseudospectra are computed, Fortier Bourque and Ransford introduced 
in \cite{BR} the notion of super-identical pseudospectra of matrices belonging to the 
same class $\M_{d}$.

Matrices $A$ and $B$ in $\M_d$ are said to have \textbf{super-identical pseudospectra} if
\begin{equation}\label{condS}
			s_{k}(zI-A)=s_{k}(zI-B)\quad\text{ for all }z\in\C, k=1,\ldots,d. 
\end{equation}
Thus, since the condition in \eqref{condR} is equivalent to 
\[	s_{d}(zI-A)=s_{d}(zI-B)\quad\text{ for all }z\in\C,	\]
the requirement in \eqref{condS} is stronger than \eqref{condR}.

It can be shown (see \cite[Theorem 3.6]{R}) that if $A$ and $B$ have
super-identical pseudospectra, then the norms of $p(A)$ and $p(B)$
are at least comparable; more specifically, 
\[	\frac{1}{\sqrt{d}}|\|p(B)\|\leq \|p(A)\|\leq\sqrt{d}\|p(B)\|	\]
holds for all polynomials $p$.  This result suggests that pairs of matrices 
having super-identical pseudospectra may be polynomially isometric, but this
need not be the case; in fact, the examples of $4 \times 4$ matrices from 
\cite{BR} and \cite{RR} mentioned in section \ref{terminologySection}
have super-identical pseudospectra but are not polynomially isometric.

On the other hand, for matrices $A,B\in\M_d$ in low dimensions $d=2$ or $d=3$,
it was shown in \cite{BR} that a \emph{sufficient} condition for $A$ and $B$
to be polynomially isometric is that $A$ and $B$ have super-identical
pseudospectra.  However, the failure of the necessity can already be seen 
by the pair of $3\times 3$ matrices in \eqref{equalSpectraWithoutMultEx}.  
For $2\times 2$ matrices, it turns out 
that the notions of identical pseudospectra, polynomial isometry, and 
super-identical pseudospectra are all equivalent.

\begin{theorem}\label{equivalenceM2}
			The following statements are equivalent for $A,B\in\M_2$.
			\begin{enumerate}
						\item\label{M2ip}
									$A$ and $B$ have identical pseudospectra.
						\item\label{M2snb}	
									$A$ and $B$ are polynomially isometric.
						\item\label{M2sip}
									$A$ and $B$ have super-identical pseudospectra.
						\item\label{M2us}
									$A$ and $B$ are unitarily similar.
			\end{enumerate}
\end{theorem}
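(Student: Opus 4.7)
My plan is to close the cycle $(4)\Rightarrow(2)\Rightarrow(1)\Rightarrow(3)\Rightarrow(4)$, leveraging the trivial containment $(3)\Rightarrow(1)$ noted just after \eqref{condS}. For $(4)\Rightarrow(2)$, I would just observe $p(UAU^{*})=Up(A)U^{*}$ together with the unitary invariance of the spectral norm. For $(2)\Rightarrow(1)$, I would use the fact (recorded in Section \ref{terminologySection}) that polynomial isometry forces $m_{A}=m_{B}$, together with the standard Cayley--Hamilton consequence that $(zI-T)^{-1}=p_{z}(T)$ for $z\notin\sigma(T)$, where $p_{z}$ is a polynomial whose coefficients depend only on $z$ and on $m_{T}$. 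Applying the same $p_{z}$ to $A$ and to $B$ then transfers polynomial isometry into equality of resolvent norms.

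For the main step $(1)\Rightarrow(3)$, I would first invoke Theorem \ref{ipImpliesEqualMs} to conclude $m_{A}=m_{B}$, then split according to $\deg m_{A}$. If $\deg m_{A}=1$, then $A=\lambda I=B$ for the unique root $\lambda$ of $m_{A}$ and super-identicality is automatic. If $\deg m_{A}=2$, then $m_{A}=\chi_{A}$ and $m_{B}=\chi_{B}$, so $\chi_{A}=\chi_{B}$ and
\[
|\det(zI-A)|=|\chi_{A}(z)|=|\chi_{B}(z)|=|\det(zI-B)|
\]
for every $z\in\C$. Combining this with the $2\times 2$ identity $s_{1}(M)\,s_{2}(M)=|\det M|$ and the equality $s_{2}(zI-A)=s_{2}(zI-B)$ supplied by $(1)$, I would extract $s_{1}(zI-A)=s_{1}(zI-B)$ on the complement of the common spectrum, and then extend it to all of $\C$ by continuity of singular values.

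For $(3)\Rightarrow(4)$, the super-identical hypothesis yields
\[
\|zI-A\|_{F}^{2}=s_{1}^{2}(zI-A)+s_{2}^{2}(zI-A)=s_{1}^{2}(zI-B)+s_{2}^{2}(zI-B)=\|zI-B\|_{F}^{2}
\]
for every $z\in\C$. I would expand each side as $2|z|^{2}-\bar z\,\tr T-z\,\overline{\tr T}+\|T\|_{F}^{2}$ and match coefficients to extract $\tr A=\tr B$ and $\|A\|_{F}=\|B\|_{F}$. Since $(3)$ implies $(1)$ and hence, by the previous paragraph, $\chi_{A}=\chi_{B}$, the matrices share eigenvalues $\lambda_{1},\lambda_{2}$ with multiplicity. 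Bringing $A$ and $B$ simultaneously to upper-triangular Schur form with the same ordering of eigenvalues, the identity $\|T\|_{F}^{2}=|\lambda_{1}|^{2}+|\lambda_{2}|^{2}+|t_{12}|^{2}$ would force the off-diagonal entries of the two Schur forms to have equal modulus, and a diagonal-unitary conjugation would then exhibit $B$ as $UAU^{*}$.

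The step I expect to require the most care is the dichotomy that opens $(1)\Rightarrow(3)$: without the minimal-polynomial output of Theorem \ref{ipImpliesEqualMs}, set-theoretic equality of spectra cannot be upgraded to $\chi_{A}=\chi_{B}$ in $\M_{2}$, and hence the equality of smaller singular values supplied by $(1)$ cannot be promoted to equality of larger singular values via the determinant identity. Once that bridge is in place, the remainder reduces to elementary arithmetic on $2\times 2$ Schur forms.
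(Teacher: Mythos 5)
Your proof is correct, but it reaches the conclusion by a genuinely different route than the paper. The paper disposes of $(4)\Rightarrow(2)$, $(2)\Rightarrow(1)$, $(3)\Rightarrow(1)$, and $(3)\Leftrightarrow(4)$ by its preliminary remarks (citing \cite{GT} and \cite{BR}), and then proves only the missing link $(1)\Rightarrow(4)$: it puts $A$ and $B$ into Schur forms $t(z-\alpha,z-\beta,-\delta_A)$ and $t(z-\alpha,z-\beta,-\delta_B)$ and deduces $\delta_A=\delta_B$ from the strict monotonicity of $s_2$ in $|\delta|$ (Lemma \ref{skTwoByTwoCase}). You instead pass through super-identical pseudospectra: $(1)\Rightarrow(3)$ via $m_A=m_B$ (Theorem \ref{ipImpliesEqualMs}), hence $\chi_A=\chi_B$, the identity $s_1(M)s_2(M)=|\det M|$, and continuity; then $(3)\Rightarrow(4)$ via the Frobenius-norm expansion and a phase adjustment of the Schur forms. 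This is essentially the strategy the paper reserves for the $3\times 3$ Theorem \ref{thmSIPCharacterization} (product and sum of singular values), specialized to $d=2$ where it becomes especially clean; it buys you a self-contained argument that needs neither the monotonicity clause of Lemma \ref{skTwoByTwoCase} nor the external results of \cite{GT} and \cite{BR}, at the cost of some extra length. Your direct proof of $(2)\Rightarrow(1)$, writing $(zI-T)^{-1}$ as a polynomial in $T$ whose coefficients depend only on $z$ and $m_T$, is likewise a valid replacement for the citation to \cite{GT}. One small quibble with your closing commentary: in $\M_2$, set-theoretic equality of spectra \emph{does} force $\chi_A=\chi_B$ (a monic quadratic is determined by its root set), so the genuine role of Theorem \ref{ipImpliesEqualMs} in your argument is to force $B=\lambda I$ in the degree-one case; this imprecision does not affect the validity of the proof.
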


As previously mentioned, the equivalence
``\ref{M2ip}$\iff$\ref{M2snb}'' 
was stated in \cite{GT}.  
The equivalence ``\ref{M2sip}$\iff$\ref{M2us}'' 
was established in \cite{BR}.  Our proof of Theorem \ref{equivalenceM2} is 
based on Lemma \ref{skTwoByTwoCase} below whose proof is left to the reader.  
Before stating that lemma, we need a definition.

Given a polynomial $p$, define $D_p:\C^2\to\C$ by
\begin{equation*}	
D_p(\alpha,\beta):= \left\{\begin{array}{cc} 
    p^{\prime}(\alpha) & \text{ if }\; \alpha = \beta \\
		&\\
		\frac{p(\alpha)-p(\beta)}{\alpha-\beta} & \text{ if }\; \alpha \neq \beta \end{array}\right.. 	
\end{equation*}

\begin{lemma}\label{skTwoByTwoCase}
			The family of $2\times 2$ matrices
			$t(\alpha,\beta,\delta)=\left[
			\begin{array}{ccc}
						\alpha	&	\delta\\
						0				&	\beta
			\end{array}\right]$ has the following properties for $\alpha,\beta,\delta\in\C$.
			\begin{enumerate}
						\item	$s_1(t(\alpha,\beta,\delta))=s_2(t(\alpha,\beta,\delta))$ 
									if and only if $|\alpha|=|\beta|$ and $\delta=0$.
						\item	$s_1(t(\alpha,\beta,\delta))$ and $s_2(t(\alpha,\beta,\delta))$ 
									are, respectively, strictly increasing and strictly decreasing 
									in $|\delta|$.
						\item	$s_1(t(\alpha,\beta,0))=\max\{|\alpha|,|\beta|\}$ and 
									$s_2(t(\alpha,\beta,0))=\min\{|\alpha|,|\beta|\}$.
						\item\label{interchangePart}
									$s_j(t(\alpha,\beta,\delta))=s_j(t(\beta,\alpha,\delta))$
									for $j=1,2$.
						\item	$p(t(\alpha,\beta,\delta))
									=t(p(\alpha),p(\beta),\delta D_{p}(\alpha,\beta))$
									for any polynomial $p$.
			\end{enumerate}
\end{lemma}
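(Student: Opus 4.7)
My plan is to build everything from two basic identities for $T = t(\alpha,\beta,\delta)$. Computing $T^{*}T$ directly and summing its diagonal entries gives
\[ s_1(T)^2 + s_2(T)^2 = \tr(T^{*}T) = |\alpha|^2+|\beta|^2+|\delta|^2, \]
while multiplicativity of the determinant gives
\[ s_1(T)\, s_2(T) = \sqrt{\det(T^{*}T)} = |\det T| = |\alpha\beta|. \]
Hence $s_1^2$ and $s_2^2$ are the two roots of $x^2 - (|\alpha|^2+|\beta|^2+|\delta|^2)\,x + |\alpha\beta|^2 = 0$, which yields a closed form depending only on the symmetric quantities $|\alpha|^2+|\beta|^2+|\delta|^2$ and $|\alpha\beta|$.

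Parts (1)--(4) then follow by direct inspection. For (1), $s_1=s_2$ is equivalent to the discriminant $(|\alpha|^2+|\beta|^2+|\delta|^2)^2 - 4|\alpha\beta|^2$ vanishing; factoring this as a difference of squares yields $\bigl[(|\alpha|-|\beta|)^2+|\delta|^2\bigr]\bigl[(|\alpha|+|\beta|)^2+|\delta|^2\bigr]=0$, and the second factor is strictly positive except in the trivial zero case, so the first must vanish. For (2), the closed form makes it transparent that $s_1^2$ is strictly increasing in $|\delta|$, and since $s_1 s_2 = |\alpha\beta|$ is independent of $\delta$, $s_2$ correspondingly decreases (strictly so whenever $\alpha\beta\ne 0$). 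For (3), the matrix $t(\alpha,\beta,0)$ is already diagonal, so its singular values are $|\alpha|$ and $|\beta|$ in nonincreasing order. Part (4) is immediate because both $|\alpha|^2+|\beta|^2+|\delta|^2$ and $|\alpha\beta|$ are invariant under the swap $\alpha\leftrightarrow\beta$.

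For part (5) I would split into two cases. When $\alpha\ne\beta$, the matrix $T$ is diagonalizable; one can write down an explicit upper-triangular change-of-basis (with ones on the diagonal and $-\delta/(\alpha-\beta)$ in the upper-right) conjugating $T$ to $\diag(\alpha,\beta)$. Applying $p$ and conjugating back keeps the matrix upper triangular with diagonal $p(\alpha),p(\beta)$, and a short calculation shows the off-diagonal entry is $\delta(p(\alpha)-p(\beta))/(\alpha-\beta) = \delta\, D_p(\alpha,\beta)$. When $\alpha=\beta$, write $T=\alpha I + \delta N$ with $N$ the $2\times 2$ nilpotent of index two; for $p(z)=z^n$ the binomial expansion terminates at the linear term to give $T^n = \alpha^n I + n\alpha^{n-1}\delta N$, matching the claim since $D_p(\alpha,\alpha) = p'(\alpha) = n\alpha^{n-1}$, and linearity in $p$ extends the identity to all polynomials.

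The only step that requires any real thought is part (5), and the main (minor) obstacle there is handling the coincident-eigenvalue case $\alpha=\beta$ cleanly; the neatest remedy is to observe that $D_p$ is in fact a bona fide polynomial in two variables (by the factor theorem applied to $p(\alpha)-p(\beta)$), so the formula from $\alpha\ne\beta$ extends by continuity. Everything else reduces to bookkeeping once the trace and determinant identities above are in hand.
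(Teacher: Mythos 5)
Your proposal is correct. Note that the paper itself offers no proof of this lemma (it is explicitly ``left to the reader''), so there is nothing to compare against; your route through the two symmetric functions $s_1^2+s_2^2=\tr(T^{*}T)=|\alpha|^2+|\beta|^2+|\delta|^2$ and $s_1s_2=|\det T|=|\alpha\beta|$ is the natural one, and the factorization of the discriminant into $\bigl[(|\alpha|-|\beta|)^2+|\delta|^2\bigr]\bigl[(|\alpha|+|\beta|)^2+|\delta|^2\bigr]$ handles parts (1)--(4) cleanly. Your parenthetical caveat on part (2) is in fact a genuine (if harmless) imprecision in the lemma as stated: when $\alpha\beta=0$ the matrix has rank at most one, so $s_2\equiv 0$ is constant rather than strictly decreasing in $|\delta|$; this does not affect the paper's applications, since in the proof of Theorem \ref{equivalenceM2} the strict monotonicity of $s_2(t(z-\alpha,z-\beta,\cdot))$ is only needed at points $z\notin\{\alpha,\beta\}$, where the determinant is nonzero. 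Your two-case treatment of part (5), together with the observation that $D_p$ is itself a polynomial in $(\alpha,\beta)$ so the generic-case formula extends by continuity, is complete.
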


\begin{proof}[Proof of Theorem \ref{equivalenceM2}]
			By our preliminary remarks, it suffices to show that matrices with 
			identical pseudospectra must be unitarily similar.  Suppose $A$ and $B$ 
			have identical pseudospectra.  Then $A$ and $B$ have the same eigenvalues, 
			say $\alpha$ and $\beta$.  Consequently, 
			$A$ and $B$ are unitarily similar to upper triangular matrices of the form
			\[	\left[
					\begin{array}{rr}
								\alpha	&	\delta_{A}\\
								0				&	\beta
					\end{array}\right]\;\text{ and }\;
					\left[
					\begin{array}{rr}
								\alpha	&	\delta_{B}\\
								0				&	\beta
					\end{array}\right],	\]
			for some $\delta_A,\delta_B\geq 0$ (e.g., see \cite[Chapter 25]{H}). Since the
			singular values of a matrix are invariant under multiplication by unitary 
			matrices on the right and left, 
			\[	s_{2}(t(z-\alpha,z-\beta,-\delta_A))=\|(zI-A)^{-1}\|^{-1}	\]
			and 
			\[	s_{2}(t(z-\alpha,z-\beta,-\delta_B))=\|(zI-B)^{-1}\|^{-1}	\]
			are equal.  Hence, by Lemma \ref{skTwoByTwoCase}, $\delta_A=\delta_B$
			and so $A$ and $B$ are unitarily similar.
\end{proof}

\begin{remark}\label{M2traceCriteria}
			In light of Murnaghan's criterion \cite{M} for unitary similarity of matrices 
			in $\M_2$, an easy-to-check necessary and sufficient condition for any (or all) 
			of the statements in Theorem \ref{equivalenceM2} is that $\tr A^{*}A=\tr B^{*}B$ 
			and $\tr A^k=\tr B^k$ for $k=1,2$.  Thus, these three traces form a complete
			set of invariants to determine when matrices have identical pseudospectra.
\end{remark}

As an amusing consequence, we state the following corollary and leave its proof to 
the reader.

\begin{corollary}\label{easyCorollary}
			There are similar matrices $A,B\in\M_2$ that do not have identical
			pseudospectra.
\end{corollary}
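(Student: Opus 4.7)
The plan is to exhibit a concrete pair of similar $2\times 2$ matrices whose spectral norms already differ, and then invoke Theorem \ref{equivalenceM2} to conclude that they cannot have identical pseudospectra. A natural candidate is the pair of nilpotent matrices
\[
A = \left[\begin{array}{cc} 0 & 1 \\ 0 & 0 \end{array}\right], \quad
B = \left[\begin{array}{cc} 0 & 2 \\ 0 & 0 \end{array}\right].
\]
First I would verify that $A$ and $B$ are similar by writing down an explicit invertible $S$ with $B=SAS^{-1}$; the diagonal matrix $S=\diag(1,2)$ does the job (equivalently, both matrices have the same Jordan form, namely a single nilpotent $2\times 2$ Jordan block).

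Next I would observe that $\|A\|=1$ while $\|B\|=2$, so the condition $\|p(A)\|=\|p(B)\|$ already fails for the monomial $p(z)=z$. Hence $A$ and $B$ are not polynomially isometric. Applying the implication \ref{M2ip}$\Rightarrow$\ref{M2snb} of Theorem \ref{equivalenceM2} (contrapositively) then shows that $A$ and $B$ do not have identical pseudospectra, which is the desired conclusion.

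As an alternative packaging, one could bypass the norm computation and appeal directly to Remark \ref{M2traceCriteria}: here $\tr A=\tr B=0$ and $\tr A^2=\tr B^2=0$, but $\tr A^{*}A=1\neq 4=\tr B^{*}B$, so the trace criterion for unitary similarity fails and therefore so does the identical-pseudospectra condition. There is no real obstacle to overcome; the only ``content'' of the corollary is the well-known fact that similarity in $\M_2$ is strictly weaker than unitary similarity, and any failure of the latter under a scaling of the off-diagonal entry of a Jordan block immediately supplies a counterexample.
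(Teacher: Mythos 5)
Your argument is correct and is essentially the one the paper intends (the corollary's proof is explicitly left to the reader): a rescaled nilpotent Jordan block is similar to the original but has a different spectral norm, so it is not polynomially isometric to it, and hence by Theorem \ref{equivalenceM2} (or by Remark \ref{M2traceCriteria}) the two cannot have identical pseudospectra. One small slip: with $S=\diag(1,2)$ the conjugate $SAS^{-1}$ has off-diagonal entry $1/2$ rather than $2$, so you should take $S=\diag(2,1)$ (or write $B=S^{-1}AS$); your parenthetical Jordan-form observation already establishes the similarity in any case.
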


It is worth mentioning that if two $d\times d$ matrices have super-identical pseudospectra, 
then they must be similar \cite{AGV}.  However, by Corollary \ref{easyCorollary}, the 
converse need not hold.

What about the case of $3\times 3$ matrices?  In this context, matrices having
super-identical pseudospectra need not be unitarily similar.  After all, a matrix 
$A$ and its transpose $A^{t}$ always have super-identical pseudospectra, but there 
are known examples of $3\times 3$ matrices $A$ that are \emph{not} unitarily similar 
to $A^{t}$ (see after Theorem \ref{thmUnitaryEquivalence} below).  Even better, it is 
proved in \cite{BR} that $A,B\in\M_3$ have super-identical pseudospectra if and only 
if $A$ is unitarily similar to $B$ or to $B^{t}$; consequently, $A$ and $B$ must be 
polynomially isometric.  Furthermore, in analogy to Pearcy's or Sibirskii's criteria 
for unitary similarity of $3\times 3$ matrices (see \cite{P} and \cite{S}), the 
following six trace conditions are necessary and sufficient for $3\times 3$ 
matrices $A$ and $B$ to have super-identical pseudospectra \cite{R}: 
$\tr \left(A^{*}A\right)=\tr \left(B^{*}B\right)$,
$\tr \left(A^{*}A^2\right)=\tr \left(B^{*}B^2\right)$,  
$\tr \left(A^{*2}A^2\right)=\tr \left(B^{*2}B^2\right)$, $\tr A^k=\tr B^k \;\text{ for } k=1,2, 3$.
Although these results provide characterizations for matrices having super-identical 
pseudospectra, they do not appear to answer these simple questions: If 
a pair of matrices have identical pseudospectra, what condition may ensure that they 
have super-identical pseudospectra?  Is that condition necessary and sufficient? 
We now close this gap. 
\begin{theorem}\label{thmSIPCharacterization}
			The following statements are equivalent for $A,B\in\M_3$.
			\begin{enumerate}
					\item $A$ and $B$ have identical pseudospectra and $\chi_A=\chi_B$. 
					\item $A$ and $B$ have super-identical pseudospectra. 
			\end{enumerate}
\end{theorem}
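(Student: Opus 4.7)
The direction (2) $\Rightarrow$ (1) is essentially immediate: matching $s_3$'s gives identical pseudospectra directly, while matching all three singular values yields $|\chi_A(z)| = \prod_{k=1}^3 s_k(zI-A) = \prod_{k=1}^3 s_k(zI-B) = |\chi_B(z)|$ for every $z \in \C$, which forces the monic polynomials $\chi_A$ and $\chi_B$ to coincide (same zero sets with multiplicities). For the substantive direction (1) $\Rightarrow$ (2), the plan is to study the characteristic polynomial of $(zI-T)^*(zI-T)$,
\[
P_T(\lambda;z) \;=\; \lambda^3 - e_1^T(z)\,\lambda^2 + e_2^T(z)\,\lambda - e_3^T(z),
\]
whose roots in $\lambda$ are the squared singular values $s_k(zI-T)^2$. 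Since super-identical pseudospectra is equivalent to the identity $P_A \equiv P_B$, the entire goal reduces to establishing this identity.

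Two of the three coefficient matches come essentially for free. Since $e_3^T(z) = |\chi_T(z)|^2$, the hypothesis $\chi_A = \chi_B$ gives $e_3^A \equiv e_3^B$. Also, $e_1^T(z) = \tr((zI-T)^*(zI-T)) = 3|z|^2 - \bar z\,\tr T - z\,\overline{\tr T} + \|T\|_F^2$, so (using $\tr A = \tr B$, a consequence of $\chi_A = \chi_B$) the difference $c := e_1^A - e_1^B = \|A\|_F^2 - \|B\|_F^2$ is a constant in $z$. Now $\tau(z) := s_3(zI-A)^2 = s_3(zI-B)^2$ is a common root of $P_A(\cdot;z)$ and $P_B(\cdot;z)$, so subtracting these cubic equations and dividing by $\tau > 0$ (valid for $z \notin \sigma(A)$) yields
\[
(e_2^A - e_2^B)(z) \;=\; c\,\tau(z) \qquad \text{for all } z \notin \sigma(A).
\]
If $c = 0$, then $e_2^A \equiv e_2^B$ by continuity, so $P_A = P_B$ and super-identical pseudospectra follows. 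Thus the entire proof reduces to showing $c = 0$, i.e., $\|A\|_F = \|B\|_F$.

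The crux---and I expect the main obstacle---is ruling out $c \neq 0$. If $c \neq 0$, the displayed relation forces $\tau(z)$ to agree on a dense set with the polynomial $(e_2^A - e_2^B)(z)/c$ in $z$ and $\bar z$, so $\tau$ itself must be a polynomial in $z, \bar z$. On the other hand, writing $z = re^{i\theta}$ and $H(\theta) := e^{-i\theta}A + e^{i\theta}A^*$, one has $(zI-A)^*(zI-A) = r^2 I - rH(\theta) + A^*A$, so elementary Hermitian perturbation theory gives the asymptotic expansion $\tau(z) = r^2 - r\,\lambda_{\max}(H(\theta)) + O(1)$ as $r \to \infty$. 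For a polynomial $\tau$ to exhibit this asymptotic, it must have total degree exactly $2$ with top-degree part $z\bar z$, and then comparison of the $O(r)$ coefficients forces $\lambda_{\max}(H(\theta)) = -2\re(ae^{i\theta})$ for some $a \in \C$. Since $\lambda_{\max}(H(\theta))/2$ is precisely the support function of the numerical range $W(A)$, and a support function with only a first harmonic in $\theta$ corresponds to a singleton convex body, this forces $W(A)$ to be a single point, i.e., $A = \gamma I$ for some $\gamma \in \C$. But then Theorem \ref{ipImpliesEqualMs} (identical pseudospectra implies $m_A = m_B$) yields $m_B = z - \gamma$, so $B = \gamma I = A$, which makes $c = 0$ and contradicts our assumption. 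Therefore $c = 0$, completing the proof.
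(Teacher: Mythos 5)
Your proof is correct, and its overall skeleton coincides with the paper's: both arguments match the three elementary symmetric functions of the squared singular values of $zI-A$ and $zI-B$, using $\chi_A=\chi_B$ for the product, the identical-pseudospectra hypothesis for $s_3$, and then recovering $s_1$ and $s_2$ once the sum $\sum_k s_k^2$ is matched (the paper does this last step via its Lemma \ref{constantSumProductLemma}; you do it by matching all coefficients of $P_T(\lambda;z)$, which is the same thing). The genuine difference lies in how the crucial equality $\tr A^{*}A=\tr B^{*}B$ (your $c=0$) is obtained. The paper gets it from Lemma \ref{resolventEigenRe}: by the Berberian--Orland half-plane characterization, identical pseudospectra force $W(A)=W(B)$, so the extreme eigenvalues of $\re A$ and $\re B$ agree; together with $\tr \re A=\tr\re B$ this pins down all three eigenvalues of $\re A$ and $\re B$ in $\M_3$, whence $\tr(\re A)^2=\tr(\re B)^2$ and, using $\tr A^2=\tr B^2$, the desired trace equality. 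You instead extract the identity $(e_2^A-e_2^B)(z)=c\,\tau(z)$ from the common root $\tau=s_3(zI-A)^2$ and rule out $c\neq 0$ by showing that $\tau$ cannot be a polynomial in $z,\bar z$ unless $W(A)$ is a singleton, via the expansion $\tau(re^{i\theta})=r^2-r\,\lambda_{\max}(H(\theta))+O(1)$ and the fact that a support function consisting of a pure first harmonic determines a one-point convex body; the case $A=\gamma I$ then collapses to $B=A$ by Theorem \ref{ipImpliesEqualMs}. Both routes ultimately pass through the numerical range, but yours trades the Berberian--Orland input for Hermitian perturbation asymptotics at infinity; the paper's route is shorter and isolates a reusable lemma, while yours is self-contained modulo standard perturbation theory and has the pleasant byproduct that $e_2^A-e_2^B$ is a constant multiple of $s_3(zI-A)^2$. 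Your direction (2)$\Rightarrow$(1) is the same as the paper's display \eqref{prodS_k}.
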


To prove Theorem \ref{thmSIPCharacterization}, we employ two lemmas.  We postpone
the proof of Lemma \ref{resolventEigenRe} to Appendix \ref{appendix} and leave
that of Lemma \ref{constantSumProductLemma} to the reader.  As usual, for $T\in\M_d$, 
$\re T=(T+T^{*})/2$.

\begin{lemma}\label{resolventEigenRe}
			Let $A$ and $B$ be square matrices (not necessarily of the same size).  
			If $A$ and $B$ have identical pseudospectra, then the largest eigenvalues 
			of the matrices $\re A$ and $\re B$ coincide, as do the smallest eigenvalues.
\end{lemma}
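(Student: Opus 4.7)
The plan is to extract $\lambda_{\max}(\re A)$ and $\lambda_{\min}(\re A)$ from the asymptotic behavior of $t\mapsto\|(tI-A)^{-1}\|$ as $t\to\pm\infty$ along the real axis. Writing $s_{\min}(T)$ for the smallest singular value of a square matrix $T$, one has $\|(zI-T)^{-1}\|^{-1}=s_{\min}(zI-T)$ (under the paper's convention when $z\in\sigma(T)$), so the identical-pseudospectra hypothesis restricted to real $z=t$ gives
\[
s_{\min}(tI-A)=s_{\min}(tI-B)\quad\text{for all }t\in\R.
\]
It therefore suffices to prove, for any square matrix $M$, the two formulas
\[
\lambda_{\max}(\re M)=\lim_{t\to+\infty}\bigl(t-s_{\min}(tI-M)\bigr),
\qquad
\lambda_{\min}(\re M)=\lim_{t\to-\infty}\bigl(s_{\min}(tI-M)-|t|\bigr);
\]
applying them to $A$ and to $B$ then yields the conclusion.

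For the first formula, set $\Lambda:=\lambda_{\max}(\re M)$. For any unit vector $v$, Cauchy--Schwarz gives
\[
\|(tI-M)v\|\ge\re\langle(tI-M)v,v\rangle=t-\re\langle Mv,v\rangle\ge t-\Lambda,
\]
and the right-hand side is nonnegative as soon as $t\ge\Lambda$; taking the infimum over $v$ yields the lower bound $s_{\min}(tI-M)\ge t-\Lambda$ for all such $t$. For a matching upper bound, let $v_0$ be a unit eigenvector of $\re M$ corresponding to $\Lambda$ and compute
\[
\|(tI-M)v_0\|^2=t^2-2t\Lambda+\|Mv_0\|^2=(t-\Lambda)^2+C,
\]
where $C:=\|Mv_0\|^2-\Lambda^2$ is a constant independent of $t$. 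Hence $s_{\min}(tI-M)\le\sqrt{(t-\Lambda)^2+C}$, and combining the two bounds gives $t-s_{\min}(tI-M)\to\Lambda$ as $t\to+\infty$. The second formula follows from the same argument run with $t\to-\infty$ and with $v_0$ a unit eigenvector of $\re M$ for its smallest eigenvalue.

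The main obstacle is conceptual rather than computational: one must recognize that although the hypothesis \eqref{condR} is phrased over all of $\C$, the behavior of the resolvent norm on just the real axis already determines the extremal eigenvalues of $\re A$ through the two-sided asymptotics above. Once this observation is made, the lower bound reduces to a one-line numerical-range estimate, and the upper bound to testing on the right eigenvector; everything else is routine algebra.
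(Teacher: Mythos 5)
Your proof is correct, but it takes a genuinely different route from the paper's. The paper quotes a theorem of Berberian and Orland characterizing the closed half-planes containing the numerical range $W(T)$ in terms of resolvent-norm bounds off those half-planes; from this it concludes that identical pseudospectra force $W(A)=W(B)$, and then reads off the extreme eigenvalues of $\re A$ and $\re B$ as $\max_{z\in W(\cdot)}\re z$ and $\min_{z\in W(\cdot)}\re z$ via the Rayleigh quotient. You instead extract $\lambda_{\max}(\re M)$ and $\lambda_{\min}(\re M)$ directly from the two-sided asymptotics of $s_{\min}(tI-M)$ along the real axis, using only the Cauchy--Schwarz lower bound $s_{\min}(tI-M)\ge t-\Lambda$ and the exact computation $\|(tI-M)v_0\|^2=(t-\Lambda)^2+C$ on an extremal eigenvector of $\re M$ (note $C=\|Mv_0\|^2-\Lambda^2\ge 0$, so the square root is harmless). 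Both steps check out, including the sign bookkeeping as $t\to-\infty$. What your argument buys is self-containedness — no external citation, and you only use the hypothesis \eqref{condR} for real $z$ near $\pm\infty$ — which makes it arguably better suited to the paper's expository aims. What the paper's argument buys is strictly more information: it shows the entire numerical ranges coincide, not just the extreme real parts (though your method recovers this too if one applies it to $e^{i\theta}A$ and $e^{i\theta}B$ for all $\theta$, since these again have identical pseudospectra).
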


\begin{lemma}\label{constantSumProductLemma}
			If $x_1,y_1,x_2,y_2\in\R$ satisfy $x_1+y_1=x_2+y_2$ and 
			$x_1\cdot y_1=x_2\cdot y_2$, then either 
			$(x_1,y_1)=(x_2,y_2)$ or $(x_1,y_1)=(y_2,x_2)$.
\end{lemma}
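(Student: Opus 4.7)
The plan is to recognize this as a direct application of Vieta's formulas: two real numbers are completely determined (as an unordered pair) by their sum and product, because they are precisely the roots of the monic quadratic polynomial whose coefficients are built from those symmetric functions.

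Concretely, I would let $s=x_1+y_1=x_2+y_2$ and $p=x_1y_1=x_2y_2$, and consider the polynomial
\[
q(t)=t^2-st+p.
\]
Expanding $(t-x_1)(t-y_1)$ and $(t-x_2)(t-y_2)$ and using the two given equalities shows that both products equal $q(t)$, so $x_1,y_1$ and $x_2,y_2$ are each complete lists of the roots of $q$ (with multiplicity). Since a degree-two polynomial has exactly two roots counted with multiplicity, the multisets $\{x_1,y_1\}$ and $\{x_2,y_2\}$ coincide. Translating back to ordered pairs produces exactly the two alternatives in the statement: either $x_1=x_2$ and $y_1=y_2$, or $x_1=y_2$ and $y_1=x_2$.

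A purely elementary alternative, avoiding any mention of polynomials, would be to substitute $y_1=s-x_1$ into $x_1y_1=p$ to obtain $x_1^2-sx_1+p=0$, and likewise $x_2^2-sx_2+p=0$. Then $x_1$ and $x_2$ are two (possibly equal) real solutions of the same quadratic, so $x_1\in\{x_2,y_2\}$ (since the solutions are precisely $x_2$ and $s-x_2=y_2$), and each choice determines $y_1=s-x_1$ correspondingly.

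There is no real obstacle here; the only point worth being careful about is handling the possibility of a repeated value (e.g.\ $x_1=y_1$, which forces $x_2=y_2$ equal to the same common value), but this case is automatically covered by the multiset argument and falls into both of the two stated alternatives simultaneously.
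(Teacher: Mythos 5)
Your proof is correct and complete; the Vieta's-formulas argument (both numbers are the roots, with multiplicity, of $t^2-st+p$, so the multisets $\{x_1,y_1\}$ and $\{x_2,y_2\}$ coincide) is exactly the standard argument one would expect here. The paper itself leaves the proof of this lemma to the reader, so there is no authorial proof to compare against, but your write-up, including the remark on the repeated-root case, would serve perfectly well as the omitted proof.
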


\begin{proof}[Proof of Theorem \ref{thmSIPCharacterization}]
			If $A,B\in\M_3$ have super-identical pseudospectra (see \eqref{condS}), 
			then  $A$ and $B$ necessarily have identical pseudospectra   
			and the same characteristic polynomials, as 
			\begin{equation}\label{prodS_k}
						|\det(zI-A)|=\prod_{k=1}^{3}s_{k}(zI-A)
						=\prod_{k=1}^{3}s_{k}(zI-B)=|\det(zI-B)|.
			\end{equation}
			
			Suppose now that $A$ and $B$ have identical pseudospectra and 
			equal characteristic polynomials. Since
			\[	\sum_{k=1}^{3}s_{k}^{2}(zI-A)=\tr[(zI-A)^{*}(zI-A)]
			=3|z|^2-\bar{z}\tr A-z\overline{\tr A}+\tr A^{*}A, \]
			we see that 
			\begin{equation}\label{sumOfThreeSk}
						\sum_{k=1}^{3}s_{k}^2(zI-A)=\sum_{k=1}^{3}s_{k}^2(zI-B)
			\end{equation}
			holds provided $\tr A=\tr B$ and $\tr A^{*}A=\tr B^{*}B$.
			Clearly, $\chi_{A}=\chi_{B}$ implies $\tr A^k=\tr B^k$ for $k=1,2,3$.
			In particular, $\tr A=\tr B$ and so 
			$\tr\re A=\tr\re B$; therefore,	by Lemma \ref{resolventEigenRe}, 
			$\re A$ and $\re B$ have the same eigenvalues (counting multiplicities) 
			and so $\tr(\re A)^2=\tr(\re B)^2$, or equivalently,
			\[	\tr A^2+2\tr A^{*}A+\tr A^{*2}=\tr B^2+2\tr B^{*}B+\tr B^{*2}.	\]
			Thus, $\tr A^{*}A=\tr B^{*}B$ and \eqref{sumOfThreeSk} is established.

			Recalling that $A$ and $B$ have identical pseudospectra, \eqref{prodS_k}
			and \eqref{sumOfThreeSk} simplify to
			\[	\prod_{k=1}^{2}s_{k}(zI-A)=\prod_{k=1}^{2}s_{k}(zI-B)
					\;\text{ and }\;
					\sum_{k=1}^{2}s_{k}^2(zI-A)=\sum_{k=1}^{2}s_{k}^2(zI-B).	\]
			Hence, the fact that $A$ and $B$ have super-identical pseudospectra 
			follows now from Lemma \ref{constantSumProductLemma}.
\end{proof}

Although matrices with identical pseudospectra are known to have the same 
minimal polynomials (see Theorem \ref{ipImpliesEqualMs} below), they need 
not have the same characteristic polynomials; for an example, consider 
again the diagonal matrices $A$ and $B$ in \eqref{equalSpectraWithoutMultEx}. 
This example demonstrates that the assumption $\chi_A=\chi_B$ in Theorem 
\ref{thmSIPCharacterization} is not superfluous.  More strikingly, these
$A$ and $B$ have identical pseudospectra and yet \emph{none} of Ransford's 
six trace criteria (which characterize super-identical pseudospectra as 
found in \cite{R} and stated just before Theorem \ref{thmSIPCharacterization}) 
hold.  Hence, no five of those six traces alone suffice to characterize 
when a pair of (generic) matrices have identical pseudospectra!

Nevertheless, by Theorem \ref{thmSIPCharacterization}, Ransford's six trace 
criteria may be used to confirm whether (or not) a pair of matrices have 
identical pseudospectra and the same minimal polynomial of degree $3$. Instead, 
in the case of matrices with common minimal polynomial of degree $2$, we find 
and present in the next section another easy-to-check criterion to confirm that 
they have identical pseudospectra. 

\section{Matrices with quadratic minimal polynomials.}\label{quadMinPolySection}

In this section, we complete the proof of Theorem \ref{MainThm} via the proof of 
Theorem \ref{HalfMainThm} below.  To do so, we 
state an analog of Lemma \ref{skTwoByTwoCase} for $3\times 3$ matrices whose 
proof is left to the interested reader.

\begin{lemma}\label{skThreeByThreeCase}
			The family of matrices
			$T(\gamma,\alpha,\beta,\delta)=\left[
			\begin{array}{ccc}
						\gamma	&	0			&	0\\
						0				&	\alpha	&	\delta\\
						0				&	0			&	\beta
			\end{array}\right]$ has the following properties for $\alpha,\beta,\gamma,\delta\in\C$.
			\begin{enumerate}
						\item	Following the notation of Lemma \ref{skTwoByTwoCase},
									the singular values of $T(\gamma,\alpha,\beta,\delta)$ consist 
									of those of $t(\alpha,\beta,\delta)$ and $|\gamma|$.  In particular, 
									\begin{align*}
												s_1(T(\gamma,\alpha,\beta,\delta))&=s_1(t(\alpha,\beta,\delta))
												\;\text{ when }|\gamma|\leq\max\{|\alpha|,|\beta|\},\text{ and }\\
												s_3(T(\gamma,\alpha,\beta,\delta))&=s_2(t(\alpha,\beta,\delta))
												\;\text{ when }|\gamma|\geq\min\{|\alpha|,|\beta|\}.
									\end{align*}
						\item	$p(T(\gamma,\alpha,\beta,\delta))
									=T(p(\gamma),p(\alpha),p(\beta),\delta D_{p}(\alpha,\beta))$
									for any polynomial $p$.
			\end{enumerate}
\end{lemma}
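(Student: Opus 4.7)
The plan is to exploit the fact that $T(\gamma,\alpha,\beta,\delta)$ is block-diagonal with blocks $[\gamma]$ and $t(\alpha,\beta,\delta)$. Every claim then reduces to an elementary direct-sum observation combined with the corresponding assertion from Lemma \ref{skTwoByTwoCase}.

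For part (2), I would first note that polynomial evaluation respects direct sums: since $T(\gamma,\alpha,\beta,\delta)=[\gamma]\oplus t(\alpha,\beta,\delta)$, we have $p(T(\gamma,\alpha,\beta,\delta))=[p(\gamma)]\oplus p(t(\alpha,\beta,\delta))$. Applying Lemma \ref{skTwoByTwoCase}\,(5) to the $2\times 2$ block produces $p(t(\alpha,\beta,\delta))=t(p(\alpha),p(\beta),\delta D_p(\alpha,\beta))$, and reassembling yields $T(p(\gamma),p(\alpha),p(\beta),\delta D_p(\alpha,\beta))$. No new idea is needed beyond this.

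For part (1), I would compute $T^{*}T=[|\gamma|^2]\oplus t(\alpha,\beta,\delta)^{*}t(\alpha,\beta,\delta)$, whose spectrum is the multiset union of $\{|\gamma|^2\}$ and the spectrum of $t(\alpha,\beta,\delta)^{*}t(\alpha,\beta,\delta)$. Taking nonnegative square roots and sorting in nonincreasing order gives the first assertion: the list of singular values of $T(\gamma,\alpha,\beta,\delta)$ is obtained from $s_1(t(\alpha,\beta,\delta)), s_2(t(\alpha,\beta,\delta))$ by inserting $|\gamma|$ in the appropriate position.

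For the ``in particular'' statements, the only thing to verify is which position $|\gamma|$ occupies. Lemma \ref{skTwoByTwoCase}\,(2)--(3) guarantees $s_1(t(\alpha,\beta,\delta))\geq s_1(t(\alpha,\beta,0))=\max\{|\alpha|,|\beta|\}$ and $s_2(t(\alpha,\beta,\delta))\leq s_2(t(\alpha,\beta,0))=\min\{|\alpha|,|\beta|\}$. So when $|\gamma|\leq\max\{|\alpha|,|\beta|\}$, we have $|\gamma|\leq s_1(t(\alpha,\beta,\delta))$, forcing $s_1(T(\gamma,\alpha,\beta,\delta))=s_1(t(\alpha,\beta,\delta))$; dually, when $|\gamma|\geq\min\{|\alpha|,|\beta|\}$, we have $|\gamma|\geq s_2(t(\alpha,\beta,\delta))$, forcing $s_3(T(\gamma,\alpha,\beta,\delta))=s_2(t(\alpha,\beta,\delta))$. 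There is no real obstacle here: the entire argument is a bookkeeping exercise on top of Lemma \ref{skTwoByTwoCase}, with the only mild subtlety being the invocation of the monotonicity in $|\delta|$ from Lemma \ref{skTwoByTwoCase}\,(2) to compare $s_j(t(\alpha,\beta,\delta))$ with $|\gamma|$.
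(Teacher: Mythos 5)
Your proof is correct and follows exactly the route the paper intends (the paper leaves this proof to the reader): decompose $T(\gamma,\alpha,\beta,\delta)=[\gamma]\oplus t(\alpha,\beta,\delta)$, so the singular values and the polynomial functional calculus split across the blocks, and then use parts (2)--(3) of Lemma \ref{skTwoByTwoCase} to locate $|\gamma|$ relative to $s_1(t(\alpha,\beta,\delta))$ and $s_2(t(\alpha,\beta,\delta))$. Nothing is missing.
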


\begin{theorem}\label{HalfMainThm}
			The following statements are equivalent for $A,B\in\M_3$.
			\begin{enumerate}
					\item $A$ and $B$ have identical pseudospectra. 
					\item $A$ and $B$ are polynomially isometric.
			\end{enumerate}
			Moreover, if $A$ and $B$ also have the same minimal polynomial of degree $2$, 
			then the above statements are equivalent to 
			\begin{enumerate} 
						\item[3.] $\|A-\gamma_{A}I\|_{F}=\|B-\gamma_{B}I\|_{F}$, where $\gamma_{A}$ 
											and $\gamma_{B}$ are the eigenvalues corresponding to $A$ and $B$, 
											respectively, of largest multiplicity.
			\end{enumerate}
\end{theorem}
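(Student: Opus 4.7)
The implication $(2) \Rightarrow (1)$ was recorded earlier, so the plan is to establish $(1) \Rightarrow (2)$ together with the equivalence with (3). Using Theorem~\ref{ipImpliesEqualMs} to set $m := m_A = m_B$, I would split into cases by $\deg m$: when $\deg m = 1$, both $A$ and $B$ equal $\lambda I$ and the result is trivial; when $\deg m = 3$, necessarily $m = \chi_A = \chi_B$, so Theorem~\ref{thmSIPCharacterization} upgrades identical pseudospectra to super-identical pseudospectra, and then by \cite{BR}, $A$ is unitarily similar to $B$ or $B^t$, giving $\|p(A)\| = \|p(B)\|$ for every polynomial $p$. The heart of the proof, and the place where (3) enters, is the remaining case $\deg m = 2$.

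For $\deg m = 2$, I would first handle the easy subcase $m(z) = (z-\alpha)(z-\beta)$ with $\alpha \neq \beta$: here $A$ and $B$ are diagonalizable with common spectrum $\{\alpha, \beta\}$, so both are normal and $\|p(A)\| = \max\{|p(\alpha)|,|p(\beta)|\} = \|p(B)\|$; moreover (3) holds automatically since $\|A - \gamma_A I\|_F = |\alpha - \beta| = \|B - \gamma_B I\|_F$ whichever eigenvalue carries multiplicity two. The substantive subcase is $m(z) = (z-\alpha)^2$: then $N_A := A - \alpha I$ and $N_B := B - \alpha I$ are nonzero with square zero, and hence of rank one. I would first show by a direct orthonormal-basis construction---pick a unit $v \in \ran N$, extend to an orthonormal basis $\{v, w\}$ of $\ker N$, take unit $u \perp \ker N$, and adjust a phase so that $Nu$ is a nonnegative multiple of $v$---that every such $N \in \M_3$ is unitarily similar to the matrix with sole nonzero entry $\|N\|_F$. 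Ordering the basis as $(w, v, u)$, this places $A$ unitarily similar to $T(\alpha,\alpha,\alpha,s_A)$ with $s_A := \|A - \alpha I\|_F$, and similarly $B$ to $T(\alpha,\alpha,\alpha,s_B)$.

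Since $zI - T(\alpha,\alpha,\alpha,s_A) = T(z-\alpha, z-\alpha, z-\alpha, -s_A)$, I would then apply Lemma~\ref{skThreeByThreeCase}(1) with $|\gamma| = |z-\alpha|$ (which trivially meets the ``minimum'' hypothesis) to identify $s_3(zI - A) = s_2(t(z-\alpha, z-\alpha, -s_A))$. By Lemma~\ref{skTwoByTwoCase}(2) this quantity is strictly decreasing in $s_A$ for any fixed $z \neq \alpha$, so equating it with the analogue for $B$ at one such $z$ forces $s_A = s_B$; this simultaneously gives (3) and places $A$ and $B$ in the same canonical form, yielding unitary similarity and hence polynomial isometry. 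The converse $(3) \Rightarrow (1)$ follows from the same classification: matching Frobenius norms puts $A$ and $B$ in the same $T(\alpha,\alpha,\alpha,s)$. The hard part is the rank-one nilpotent subcase---specifically, establishing the unitary canonical form by hand and then correctly ordering the three singular values of $zI - A$ so that the strict monotonicity from Lemma~\ref{skTwoByTwoCase}(2) recovers $s_A$ from the pseudospectral data.
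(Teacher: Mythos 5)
There is a genuine gap, and it sits exactly where the paper's real work is. In the subcase $\deg m=2$ with $m(z)=(z-\alpha)(z-\beta)$, $\alpha\neq\beta$, you assert that $A$ and $B$ are normal because they are diagonalizable. That implication is false: such matrices are similar to a diagonal matrix, but need not be \emph{unitarily} similar to one. In the notation of Lemma \ref{skThreeByThreeCase}, the matrix $T(\alpha,\alpha,\beta,\delta)$ with $\delta>0$ has minimal polynomial $(z-\alpha)(z-\beta)$ and is not normal; by Lemmas \ref{skTwoByTwoCase} and \ref{skThreeByThreeCase} one has $\|p(T(\alpha,\alpha,\beta,\delta))\|=s_1\bigl(t(p(\alpha),p(\beta),\delta D_p(\alpha,\beta))\bigr)>\max\{|p(\alpha)|,|p(\beta)|\}$ whenever $\delta D_p(\alpha,\beta)\neq0$, so your formula $\|p(A)\|=\max\{|p(\alpha)|,|p(\beta)|\}$ fails. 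Your claim that (3) holds automatically with $\|A-\gamma_AI\|_F=|\alpha-\beta|$ fails for the same reason: for $A=T(\alpha,\alpha,\beta,\delta_A)$ one gets $\sqrt{|\alpha-\beta|^2+\delta_A^2}$. So the subcase you dismiss as easy is precisely the one carrying the content of the theorem, and your argument there proves nothing.

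For comparison, the paper's reduction runs as follows: Theorem \ref{thmSIPCharacterization}, combined with the Fortier Bourque--Ransford result that super-identical pseudospectra imply polynomial isometry in $\M_3$, disposes of every case with $\chi_A=\chi_B$; this covers $\deg m=3$ and also your nilpotent subcase $m=(z-\alpha)^2$ (where $\chi_A=\chi_B=(z-\alpha)^3$ automatically). What remains is exactly $m=(z-\alpha)(z-\beta)$ with $\alpha\neq\beta$ and $\chi_A\neq\chi_B$, where $A$ and $B$ are unitarily similar to $T(\alpha,\alpha,\beta,\delta_A)$ and $T(\beta,\alpha,\beta,\delta_B)$ respectively, and Lemmas \ref{skTwoByTwoCase} and \ref{skThreeByThreeCase} show that each of (1), (2), and (3) is equivalent to $\delta_A=\delta_B$ --- the same singular-value and monotonicity analysis you carried out, correctly, for the rank-one nilpotent case. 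That nilpotent argument of yours is fine (and is a nice explicit alternative to routing that case through Theorem \ref{thmSIPCharacterization}), but the distinct-eigenvalue, unequal-characteristic-polynomial case requires the same kind of analysis rather than an appeal to normality.
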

\begin{proof}
			It is shown in \cite{GT} that $A$ and $B$ have identical pseudospectra 
			whenever they are polynomially isometric.  To prove the converse, 
			assume $A$ and $B$ have identical pseudospectra.  By Theorem 
			\ref{ipImpliesEqualMs} in Appendix \ref{appendix}, $A$ and $B$ have the same 
			minimal polynomials.  If their common minimal polynomial has degree one, 
			then $A=\alpha I$ and $B=\beta I$ for some $\alpha,\beta\in\C$; it follows 
			readily from this that $\alpha=\beta$ and so $A$ and $B$ are polynomially 
			isometric.  Thus, in light of Theorem \ref{thmSIPCharacterization}, we 
			assume that $A$ and $B$ have common quadratic minimal polynomial 
			$p(z) = (z-\alpha)(z-\beta)$, where $\alpha, \beta \in \C$
			and $\alpha\neq\beta$, and such that $\chi_A\neq\chi_B$; 
			i.e., $A$ and $B$ have the same eigenvalues but with distinct 
			multiplicities.  Since the three statements listed in the 
			theorem are invariant under unitary similarity, we assume 
			further without loss of generality that $A$ and $B$ are 
			upper triangular matrices of the form 
			\[	A=\left[
					\begin{array}{ccc}
								\alpha	&	0				&	0\\
								0				&	\alpha	&	\delta_{A}\\
								0				&	0				&	\beta
					\end{array}\right]\;\text{ and }\;
					B=\left[
					\begin{array}{ccc}
								\beta		&	0				& 0	\\
								0				&	\alpha	&	\delta_{B}\\
								0				&	0				&	\beta
					\end{array}\right],	\]
			where $\delta_A,\delta_B>0$ (\cite[Chapter 25]{H}).

			By Lemmas \ref{skTwoByTwoCase} and \ref{skThreeByThreeCase}, 
			$A=T(\alpha,\alpha,\beta,\delta_{A})$ and 
			$B=T(\beta,\alpha,\beta,\delta_{B})$ satisfy
			\begin{align*}
						\|(zI-A)^{-1}\|^{-1}
						&=s_{2}\left(t(z-\alpha,z-\beta,-\delta_A)\right),\\
						\|(zI-B)^{-1}\|^{-1}
						&=s_{2}\left(t(z-\alpha,z-\beta,-\delta_B)\right),\\
						\|p(A)\|
						&=s_{1}\left(t(p(\alpha),p(\beta),\delta_A D_{p}(\alpha,\beta))\right),
						\text{ and }\\
						\|p(B)\|
						&=s_{1}\left(t(p(\alpha),p(\beta),\delta_B D_{p}(\alpha,\beta))\right).
			\end{align*}
			We now see that $A$ and $B$ have identical pseudospectra if and only if 
			$\delta_A=\delta_B$.  Likewise, $A$ and $B$ are polynomially isometric
			if and only if $|\delta_A D_p(\alpha,\beta)|=|\delta_B D_p(\alpha,\beta)|$
			for all polynomials $p$, or equivalently, $\delta_A=\delta_B$.  On the other 
			hand, $\delta_A=\delta_B$ is equivalent to $\|A-\alpha I\|_{F}=\|B-\beta I\|_{F}$.  
\end{proof}

Another look at the proof of Theorem \ref{HalfMainThm} reveals the validity of the 
following result which complements Theorem \ref{thmSIPCharacterization} above.
We omit the details.

\begin{theorem}\label{thmUnitaryEquivalence}
			The following statements are equivalent for $A,B\in\M_3$ having equal quadratic 
			minimal polynomials.
			\begin{enumerate}
						\item $A$ and $B$ have identical pseudospectra and $\chi_A=\chi_B$. 
						\item	$A$ and $B$ are unitarily similar.
			\end{enumerate}
\end{theorem}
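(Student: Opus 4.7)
The direction $(2) \Rightarrow (1)$ is immediate, since unitary similarity preserves every resolvent norm (hence all pseudospectra) and also the characteristic polynomial.

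For the converse, my plan is to re-run the canonical-form analysis from the proof of Theorem \ref{HalfMainThm} under the stronger hypothesis $\chi_{A}=\chi_{B}$. Assume the common minimal polynomial of $A$ and $B$ is $m(z)=(z-\alpha)(z-\beta)$ with $\alpha\neq\beta$; the repeated-root case $m(z)=(z-\alpha)^{2}$ is analogous, with $\beta$ replaced by $\alpha$ throughout. Then $\chi_{A}=\chi_{B}$ forces a common eigenvalue, say $\alpha$, to have algebraic multiplicity $2$ in both matrices. I would next Schur triangularize each matrix with the diagonal ordered as $(\alpha,\alpha,\beta)$; a short direct computation of $(T-\alpha I)(T-\beta I)$ for such an upper triangular $T$ shows that the minimal-polynomial constraint kills the $(1,2)$ entry. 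A further $2\times 2$ unitary acting on the first two coordinates preserves the scalar block $\alpha I_{2}$ and rotates the column of $(1,3)$ and $(2,3)$ entries so that the $(1,3)$ entry vanishes and the $(2,3)$ entry becomes nonnegative real. Thus $A$ and $B$ are unitarily similar to $T(\alpha,\alpha,\beta,\delta_{A})$ and $T(\alpha,\alpha,\beta,\delta_{B})$, respectively, with $\delta_{A},\delta_{B}\geq 0$, in the notation of Lemma \ref{skThreeByThreeCase}.

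To finish, I would exploit the identical-pseudospectra hypothesis. Lemma \ref{skThreeByThreeCase} gives that the singular values of $zI-A$ consist of $|z-\alpha|$ together with those of $t(z-\alpha,z-\beta,-\delta_{A})$, and Lemma \ref{skTwoByTwoCase} yields
\[
s_{2}(t(z-\alpha,z-\beta,-\delta_{A}))\leq s_{2}(t(z-\alpha,z-\beta,0))=\min\{|z-\alpha|,|z-\beta|\}\leq |z-\alpha|,
\]
so $\|(zI-A)^{-1}\|^{-1}=s_{2}(t(z-\alpha,z-\beta,-\delta_{A}))$, and analogously for $B$. Identical pseudospectra equates these for every $z\in\C$, and the strict monotonicity of $s_{2}$ in $|\delta|$ (Lemma \ref{skTwoByTwoCase}) forces $\delta_{A}=\delta_{B}$. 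Hence $A$ and $B$ share a common canonical form and must be unitarily similar.

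The main (mild) obstacle is the canonical-form reduction in the repeated-root case $m(z)=(z-\alpha)^{2}$, where the Schur form has all three diagonal entries equal to $\alpha$ and one must argue independently---for instance, via the inclusion $\mathrm{Im}(A-\alpha I)\subseteq\mathrm{Ker}(A-\alpha I)$ followed by a coordinate swap---that a single nonnegative real parameter $\delta$ captures all remaining structure. Once this is in place, the rest is a mechanical parallel to the $\chi_{A}\neq\chi_{B}$ argument in Theorem \ref{HalfMainThm}.
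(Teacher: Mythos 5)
Your proof is correct and is essentially the argument the paper has in mind: the authors simply remark that the result follows from ``another look at the proof of Theorem \ref{HalfMainThm}'' and omit the details, and your reduction to the canonical forms $T(\gamma,\alpha,\beta,\delta)$ (with $\gamma$ pinned down by $\chi_A=\chi_B$ and $\delta$ pinned down by the identical-pseudospectra condition via the strict monotonicity of $s_2$ in $|\delta|$) is exactly that omitted argument, with the repeated-root case handled correctly as well. No gaps.
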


Note, however, that the equivalence in Theorem \ref{thmUnitaryEquivalence} need not
hold if $A$ and $B$ have equal \emph{cubic} minimal polynomials.  For an example, let
\[	A=\left[
		\begin{array}{ccc}
					0	&	1	& 0\\
					0	&	0	& 2\\
					0	&	0	&	0
		\end{array}\right]	\]
and $B=A^{t}$, and note that $A$ and $B$ cannot be unitarily equivalent because
\[	\tr(AA^{*}A^{2}A^{*2})\neq\tr(BB^{*}B^{2}B^{*2})	\]
(see the third example in \cite{S}).  It also worth noting that unitary similarity 
of $d\times d$ matrices having equal quadratic minimal polynomials has been 
characterized in \cite{GI} as those matrices having the same eigenvalues and the 
same singular values.

Although the third condition in Theorem \ref{HalfMainThm} is easy to check, it 
does not lend itself to generalization.  For instance, by Lemma \ref{skTwoByTwoCase}, 
the $4\times 4$ matrices
\[	A=\left[
		\begin{array}{cccc}
					1	&	4	& 0	& 0\\
					0	&	0	& 0	& 0\\
					0	&	0	& 1	& 3\\
					0	&	0	& 0	& 0
		\end{array}\right]\;\text{ and }\;
		B=\left[
		\begin{array}{cccc}
					1	&	4	& 0	& 0\\
					0	&	0	& 0	& 0\\
					0	&	0	& 1	& 2\\
					0	&	0	& 0	& 0
		\end{array}\right]	\]
have identical pseudospectra, equal quadratic minimal polynomials, and yet
the Frobenius norms $\|A-\gamma_A I\|_{F}$ and $\|B-\gamma_B I\|_{F}$ are
not equal whether one interprets $\gamma_A$ and $\gamma_B$ as $0$ or $1$.
Nevertheless, the following theorem holds.

\begin{theorem}\label{quadMinPolyThm}
			The following statements are equivalent for square matrices $A$ and $B$
			(not necessarily of the same size) with quadratic minimal polynomials.
			\begin{enumerate}
						\item	$A$ and $B$ have identical pseudospectra.
						\item	$A$ and $B$ are polynomially isometric.
						\item	$\|A\|=\|B\|$.
			\end{enumerate}
\end{theorem}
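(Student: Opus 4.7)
The plan is to attach to each matrix a single nonnegative real parameter $\delta^*$ and show that each of the three conditions is equivalent to $\delta_A^*=\delta_B^*$. Since (2) implies (1) by the cited result from \cite{GT} and implies (3) by taking $p(z)=z$, it suffices to prove that (1) and (3) each yield (2). I would work under the standing assumption that $A$ and $B$ share a common quadratic minimal polynomial $(z-\alpha)(z-\beta)$, which is forced by (1) or (2) via Theorem \ref{ipImpliesEqualMs} and which I take as implicit in the theorem's hypothesis (otherwise the implication from (3) can already fail for trivial reasons, as $\diag(1,0)$ and $t(0,0,1)$ show).

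The key step is a canonical form: any square matrix with minimal polynomial $(z-\alpha)(z-\beta)$ is unitarily similar to
\[
\alpha I_{k_\alpha}\oplus \beta I_{k_\beta}\oplus \bigoplus_{i=1}^{n} t(\alpha,\beta,\delta_i),
\]
with $\delta_i>0$ (the $\beta I_{k_\beta}$ summand being absorbed when $\alpha=\beta$). I would prove this in two steps. First, Schur-triangularize $A$ and order the diagonal so that all $\alpha$'s precede all $\beta$'s. When $\alpha\neq\beta$, expanding $(A-\alpha I)(A-\beta I)=0$ on the resulting block-upper-triangular form forces each strictly upper-triangular on-diagonal piece to vanish (since a nilpotent $N$ with $N(N+(\alpha-\beta)I)=0$ must be zero), leaving only a rectangular coupling $X$ between the $\alpha$- and $\beta$-eigenspaces. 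Second, apply an SVD to $X$, conjugate by block-diagonal unitaries, and permute: each zero singular value of $X$ contributes a scalar summand, and each nonzero singular value $\delta_i$ contributes a $t(\alpha,\beta,\delta_i)$ summand. The case $\alpha=\beta$ is analogous: $A-\alpha I$ squares to zero, and an SVD of its restriction to $\ker(A-\alpha I)^\perp$ produces $t(\alpha,\alpha,\delta_i)$ blocks plus a scalar $\alpha$-summand.

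Setting $\delta_A^*:=\max_i\delta_i$ (or $0$ if no $2\times 2$ blocks appear), Lemma \ref{skTwoByTwoCase} (particularly parts (2), (3), and (5)) gives for every polynomial $p$ and every $z\in\C$
\[
\|p(A)\|=s_1\bigl(t(p(\alpha),p(\beta),\delta_A^*\,D_p(\alpha,\beta))\bigr),\qquad \|(zI-A)^{-1}\|^{-1}=s_2\bigl(t(z-\alpha,z-\beta,-\delta_A^*)\bigr),
\]
after absorbing the scalar summands via $s_1(t(a,b,\delta))\geq\max(|a|,|b|)=s_1(t(a,b,0))$ and the dual inequality for $s_2$. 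The same formulas hold for $B$ with $\delta_B^*$, and specializing $p(z)=z$ yields $\|A\|=s_1(t(\alpha,\beta,\delta_A^*))$. By the strict monotonicity of $s_1$ and $s_2$ in $|\delta|$ (Lemma \ref{skTwoByTwoCase}(2)), each of (1), (2), (3) reduces to the single scalar equation $\delta_A^*=\delta_B^*$.

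The main obstacle is the canonical form itself: Lemmas \ref{skTwoByTwoCase} and \ref{skThreeByThreeCase} manipulate only matrices already in block form, so the substance of the proof is showing that arbitrary-sized matrices with a quadratic minimal polynomial admit the decomposition above under unitary similarity. The Schur-plus-SVD recipe is elementary but must be executed carefully, verifying that zero singular values of the coupling block produce genuinely orthogonal scalar summands and that each nonzero singular value yields a unitarily similar $t(\alpha,\beta,\delta_i)$ block.
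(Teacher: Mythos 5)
Your proof is correct and takes essentially the same route as the paper's sketch: reduce to the canonical form $\alpha I\oplus\beta I\oplus\bigoplus_i t(\alpha,\beta,\delta_i)$ (which the paper simply cites from \cite[Chapter 25]{H}, whereas you prove it via Schur triangularization plus an SVD of the coupling block) and then use Lemma \ref{skTwoByTwoCase} to collapse all three conditions to equality of the largest off-diagonal parameter. Your observation that the hypothesis must implicitly include $m_A=m_B$ for the implication from (3) to hold is correct and worth making explicit; the paper leaves it tacit by writing ``$\alpha$ and $\beta$ are the zeros of $m_A$ and $m_B$.''
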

\begin{proof}[Sketch of the proof.]
			In view of the assumptions on $A$ and $B$, we may assume that $A$ and $B$ 
			have the form (\cite[Chapter 25]{H})
			\begin{align*}
						A=\alpha I_{r}\oplus\beta I_{s}\oplus
						\left[
						\begin{array}{rr}
									\alpha	&	\delta_{1}\\
									0				&	\beta
						\end{array}\right]\oplus\cdots\oplus
						\left[
						\begin{array}{rr}
									\alpha	&	\delta_{t}\\
									0				&	\beta
						\end{array}\right]\\
						\text{ and }\quad
						B=\alpha I_{u}\oplus\beta I_{v}\oplus
						\left[
						\begin{array}{rr}
									\alpha	&	\gamma_{1}\\
									0				&	\beta
						\end{array}\right]\oplus\cdots\oplus
						\left[
						\begin{array}{rr}
									\alpha	&	\gamma_{w}\\
									0				&	\beta
						\end{array}\right]
			\end{align*}
			where $\alpha$ and $\beta$ are the zeros of $m_A$ and $m_B$, 
			$I_n$ denotes the $n\times n$ identity matrix, and 
			\[	\delta_1\geq\cdots\geq\delta_t\geq0\quad\text{ and }\quad
					\gamma_1\geq\cdots\geq\gamma_w\geq0.	\]
			In this case, the conclusion follows in an analogous manner 
			as in that of Theorem \ref{HalfMainThm} after observing that
			$\|A\|=\|B\|$ precisely when $\delta_1=\gamma_1$ by Lemma 
			\ref{skTwoByTwoCase}.
\end{proof}

\appendix
\section{Two results on identical pseudospectra.}\label{appendix}

In this section, we prove two technical results used in this article on matrices 
(of arbitrary size $d$) having identical pseudospectra.  The first, stated as Lemma 
\ref{resolventEigenRe} above, concerns equality of the smallest and largest 
eigenvalues of their ``real parts.''  The second concerns equality of 
minimal polynomials.

To begin, recall that the numerical range (or field of values), $W(T)$, of 
$T\in\M_{d}$ is defined by $W(T)=\{ x^{*}Tx: \|x\|_{\C^{d}}=1 \}$.
It is well known that $W(T)$ is a compact convex subset of $\C$ that 
contains $\sigma(T)$.  As such, $W(T)$ is the intersection of all closed 
half-planes $H$ containing it.  With these notions available, we are ready 
to prove Lemma \ref{resolventEigenRe}.

\begin{proof}[Proof of Lemma \ref{resolventEigenRe}]
			By a result from \cite{BO}, a closed half-plane $H$ satisfies 
			$W(T)\subseteq H$ if and only if $\sigma(T)\subseteq H$ and 
			$\|(zI-T)^{-1}\|\leq 1/\dist(z,H)$ for all $z\notin H$.  
			Therefore, $W(A)=W(B)$ when $A$ and $B$ have identical pseudospectra, 
			and the desired conclusion is now at hand; after all, the smallest 
			and largest eigenvalues of the self-adjoint matrix $\re T$ 
			are given by, respectively,
			\[	\min_{\|x\|_{\C^{d}}=1}\langle (\re T)x,x\rangle=\min_{z\in W(T)}\re z
					\;\text{ and }\;
					\max_{\|x\|_{\C^{d}}=1}\langle (\re T)x,x\rangle=\max_{z\in W(T)}\re z.\qedhere	\]
\end{proof}

Next, we turn to the second result.  In \cite[Chapter 23]{H}, it is stated that 
matrices having identical pseudospectra must have the same minimal polynomials.  
Since we were unable to find a direct proof of this fact, we include one for 
completeness.

\begin{theorem}\label{ipImpliesEqualMs}
			Let $A$ and $B$ be square matrices (not necessarily of the same size).  
			If $A$ and $B$ have identical pseudospectra, then $A$ and $B$ have the 
			same minimal polynomial. 
\end{theorem}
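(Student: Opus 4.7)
My plan is to pin down the multiplicity of each eigenvalue $\lambda$ in the minimal polynomial purely from the scalar resolvent function $\varphi_A(z) := \|(zI-A)^{-1}\|$. Since the equal-pseudospectra convention already yields $\sigma(A)=\sigma(B)$, and since both $m_A$ and $m_B$ are monic, it suffices to show that each common eigenvalue $\lambda$ appears with the same multiplicity in $m_A$ and in $m_B$.

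From Jordan form, the multiplicity $\mu_A(\lambda)$ of $\lambda$ as a root of $m_A$ equals the size of the largest Jordan block of $A$ at $\lambda$, which in turn equals the order of the pole of $R_A(z):=(zI-A)^{-1}$ at $\lambda$. Writing the matrix Laurent expansion
\[
R_A(z)=\sum_{k=-\mu_A(\lambda)}^{\infty} C_k\,(z-\lambda)^k, \qquad C_{-\mu_A(\lambda)}\neq 0,
\]
valid on a punctured neighborhood of $\lambda$, I would establish that $\mu_A(\lambda)$ is recoverable from $\varphi_A$ alone via the characterization
\[
\mu_A(\lambda)=\min\bigl\{\,k\in\mathbb{Z}_{\geq 0}\ :\ \limsup_{z\to\lambda}|z-\lambda|^{k}\,\varphi_A(z)<\infty\,\bigr\}.
\]
Since $\varphi_A\equiv\varphi_B$ by hypothesis, this immediately forces $\mu_A(\lambda)=\mu_B(\lambda)$ at every common eigenvalue, which together with $\sigma(A)=\sigma(B)$ yields $m_A=m_B$.

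To verify the characterization, I would multiply the Laurent series through by $(z-\lambda)^{\mu_A(\lambda)}$; the standard triangle inequality applied to the resulting convergent power series shows $|z-\lambda|^{\mu_A(\lambda)}\varphi_A(z)\to\|C_{-\mu_A(\lambda)}\|$, a finite number. For any strictly smaller exponent $k<\mu_A(\lambda)$, the \emph{reverse} triangle inequality isolates the leading singular term and gives
\[
|z-\lambda|^{k}\varphi_A(z)\ \geq\ \|C_{-\mu_A(\lambda)}\|\cdot|z-\lambda|^{k-\mu_A(\lambda)}-O\bigl(|z-\lambda|^{k-\mu_A(\lambda)+1}\bigr),
\]
which blows up as $z\to\lambda$ precisely because $C_{-\mu_A(\lambda)}\neq 0$. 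I expect the only real (and minor) technical hurdle to be executing this reverse-triangle estimate cleanly for the full matrix-valued Laurent tail, since one needs the leading coefficient's positive norm to dominate the bounded remainder. Once the scalar order of singularity of $\varphi_A$ at $\lambda$ is shown to coincide with the matrix-valued pole order of $R_A$, no further ingredient is required.
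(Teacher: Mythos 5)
Your argument is correct and is essentially the paper's own proof: both recover the exponent of $(z-\lambda)$ in the minimal polynomial as the order of singularity of the scalar function $z\mapsto\|(zI-T)^{-1}\|$ at $\lambda$, the paper doing so via the partial-fraction (spectral projection) expansion of the resolvent and you via the equivalent Laurent/Jordan-block expansion, which is precisely the alternative route the paper itself sketches in the remark following the theorem. The reverse-triangle-inequality step you flag as the only technical hurdle goes through exactly as you describe, since the leading coefficient is nonzero and the tail is of strictly higher order.
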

\begin{proof}
			Recall that the minimal polynomial $m_{T}$ of $T \in \M_d$ is given by
			\begin{equation}\label{minPolynomial}
					m_{T}(z)=\prod_{\lambda\in\sigma(T)}(z-\lambda)^{\nu_T(\lambda)},
			\end{equation}
			where $\nu_{T}(\lambda)$ denotes the index of $\lambda \in \sigma(T)$,
			the smallest nonnegative integer such that 
			$\ker(T-\lambda I)^{\nu_{T}(\lambda)}=\ker(T-\lambda I)^{d}$. 

			Moreover, the resolvent $(zI-T)^{-1}$ of $T$ at $z$ is given by
			\begin{equation}\label{partialFractionResolvent}
						(zI-T)^{-1}=\sum_{\lambda\in\sigma(T)}\sum_{k=0}^{\nu(\lambda)-1}
						(z-\lambda)^{-(k+1)}(T-\lambda I)^{k}E_{\lambda},\quad
						z\notin\sigma(T).
			\end{equation}
			Here, $E_{\lambda_{1}}, \ldots, E_{\lambda_{s}}$ are the 
			uniquely defined orthogonal projections on $\C^{d}$ such that 
			$I=E_{\lambda_{1}}+\cdots+E_{\lambda_{s}}$ and so that
			the range $\ran(E_{\lambda_{j}})=\ker(T-\lambda I)^{\nu(\lambda_j)}$ 
			for $j=1,\ldots,s$.  (The formula in \eqref{partialFractionResolvent}
			may be verified by multiplying its right-hand side by 
			$(zI-T)=(z-\lambda)I-(T-\lambda I)$.) In light of \eqref{partialFractionResolvent}, 
			the product $|z-\lambda|^{\ell}\cdot\|(zI-T)^{-1}\|$ is bounded near 
			$\lambda\in\sigma(T)$ if and only if $\ell\geq\nu_T(\lambda)$. 

			Now suppose $A$ and $B$ are square matrices such that
			$	\|(zI-A)^{-1}\|\leq\|(zI-B)^{-1}\|$ for all $z\in\C$.
			Then $\sigma(A)\subseteq\sigma(B)$, and $\nu_{B}(\alpha)
			\geq\nu_{A}(\alpha)$ holds for all $\alpha\in\sigma(A)$ because
			$|z-\alpha|^{\nu_{B}(\alpha)}\cdot\|(zI-A)^{-1}\|$ is bounded near 
			$\alpha$.  Hence, $m_{A}$ divides $m_{B}$ by \eqref{minPolynomial}.  
			Reversing the roles of $A$ and $B$ then yields the result.
\end{proof}

\begin{remark}\label{JordanFormProof}
			One can also use the Jordan canonical form to prove Theorem 
			\ref{ipImpliesEqualMs}.  For a proof following that approach, 
			it suffices to note that $\nu_T(\lambda)$ equals the size of 
			largest Jordan block corresponding to $\lambda$ and then 
			proceed to compute the resolvent of the Jordan matrix.  
			From this, one can argue that (as in the proof above) 
			$|z-\lambda|^{\ell}\cdot\|(zI-T)^{-1}\|$ is bounded near 
			$\lambda\in\sigma(T)$ if and only if $\ell\geq\nu_T(\lambda)$.
\end{remark}

\textbf{Acknowledgments.}
This faculty-student research project was funded in part by the Seidler Scholarly 
Collaboration Fellowship during the summer of 2018.  We wish to thank the Seidler 
family and the Dean’s Office in the College of Arts and Sciences at Florida Gulf 
Coast University for providing such funding.  We would also like to thank the 
referees and editors whose comments helped improve the exposition of the article. 
Lastly, the second author wishes to thank Professor M. Embree for pointing out 
that an alternative proof to Lemma \ref{ipImpliesEqualMs} based on the Jordan 
canonical form is contained in the proof of Theorem 52.3 in \cite{TE}.

\end{document}